\numberwithin{equation}{section}
\theoremstyle{plain}
\newtheorem{theorem}{Theorem}[section]
\newtheorem{lemma}[theorem]{Lemma}
\newtheorem{corollary}[theorem]{Corollary}
\theoremstyle{definition}
\newtheorem{conjecture}[theorem]{Conjecture}
\newtheorem{?}[theorem]{Problem}
\begin{document}
\title[A conjecture of Baruah and Begum]
{A conjecture of Baruah and Begum on the smallest parts function of restricted overpartitions}

\author[D. Tang]{Dazhao Tang}

\address[Dazhao Tang]{School of Mathematic Sciences, Chongqing Normal University,
Chongqing 400047, P.R. China}
\email{dazhaotang@sina.com}

\date{\today}

\begin{abstract}
In 2017, Andrews, Dixit, Schultz and Yee introduced the function $\overline{\textrm{spt}}_\omega(n)$, which denotes the number of smallest
parts in the overpartitions of $n$ in which the smallest part is always overlined and all odd parts are less than twice the smallest
part. Recently, Baruah and Begum established several internal congruences and congruences modulo small powers of $5$ for $\overline{\textrm{spt}}_\omega(n)$. Moreover, they conjectured a family of internal congruences modulo any powers of $5$ and two families
of congruences modulo any even powers of $5$. In this paper, we confirm three families of congruences due to Baruah and Begum.
\end{abstract}

\subjclass[2010]{05A17, 11P83}

\keywords{Partitions; partition congruences; internal congruences; smallest parts function; overpartitions}

\maketitle

\section{Introduction}
A partition of a nonnegative integer $n$ is a finite sequence of weakly decreasing positive integers whose sum is $n$. Let $p(n)$ denote
the number of partitions of $n$. The generating function of $p(n)$ is given by
\begin{align*}
\sum_{n=0}^\infty p(n)q^n=\dfrac{1}{(q;q)_\infty},
\end{align*}
where here and throughout this paper, we adopt the following customary notation:
\begin{align*}
(a;q)_n:=\prod_{j=1}^n(1-aq^{j-1}),\quad n\in\mathbb{N}\cup\{\infty\}.
\end{align*}
For notational convenience, we denote $E(q^j)=(q^j;q^j)_\infty$.

In 1919, Ramanujan \cite{Ram88} discovered that $p(n)$ satisfies some beautiful congruences modulo powers of $5$, $7$ and $11$. One of them 
is that for any $\alpha\geq1$,
\begin{align*}
p(5^\alpha n+\delta_\alpha)\equiv0\pmod{5^\alpha},
\end{align*}
where $\delta_\alpha$ is the least nonnegative integer such that $24\delta_\alpha\equiv1\pmod{5^\alpha}$. In 2004, Corteel and Lovejoy
\cite{CL04} introduced the notion of overpartitions in order to give a combinatorial proof of some basic hypergeometric transformation
formulas. An overpartition of $n$ is a finite weakly decreasing sequence of natural numbers whose sum is $n$ in which the first occurrence
(equivalently, the final occurrence) of a number \textit{may} be overlined. The smallest parts function $\textrm{spt}(n)$, counting the
total number of  appearances of the smallest parts in all partitions of $n$, was introduced by Andrews \cite{And08}. He also proved some
congruences modulo $5$, $7$ and $13$ for $\textrm{spt}(n)$. Since then, this function and some analogous functions have been investigated
in many works, see the survey of Chen \cite{Chen17} for more details.

In \cite{ADSY17}, Andrews, Dixit, Schultz and Yee introduced the function $\overline{\textrm{spt}}_\omega(n)$, which denotes the number of
smallest parts in the overpartitions of $n$ in which the smallest part is always overlined and all odd parts are less than twice the
smallest part. They proved that
\begin{align*}
\sum_{n=0}^\infty\overline{\textrm{spt}}_\omega(n)q^n=\sum_{n=1}^\infty\dfrac{q^n(-q^{n+1};q)_n(-q^{2n+2};q^2)_\infty}
{(1-q^n)^2(q^{n+1};q)_n(q^{2n+2};q^2)_\infty}.
\end{align*}
Moreover, they \cite{ADSY17} derived several congruences modulo $2$, $3$, $4$, $5$ and $6$ for $\overline{\textrm{spt}}_\omega(n)$.
Soon after, Wang \cite{Wang17} as well as Cui, Gu and Hao \cite{CGH18} also established several new congruences modulo powers of $2$ and $3$
satisfied by $\overline{\textrm{spt}}_\omega(n)$. In particular, Wang \cite{Wang17} proved the following generating function:
\begin{align*}
\sum_{n=0}^\infty\overline{\textrm{spt}}_\omega(n)q^n=\dfrac{E(q^2)^9}{E(q)^6}.
\end{align*}
Recently, Baruah and Begum \cite{BB20} further investigated the exact generating function concerning
$\overline{\textrm{spt}}_\omega(n)$. More precisely, they proved that
\begin{align}
 &\sum_{n=0}^\infty\overline{\textrm{spt}}_\omega(10n+5)q^n=E(q)^2E(q^{10})\Bigg(18+720q\dfrac{E(q^2)E(q^{10})^3}{E(q)^3E(q^5)}\notag\\
 &\qquad+7625{\left(q\dfrac{E(q^2)E(q^{10})^3}{E(q)^3E(q^5)}\right)}^2
+32500{\left(q\dfrac{E(q^2)E(q^{10})^3}{E(q)^3E(q^5)}\right)}^3\notag\\
 &\qquad+50000{\left(q\dfrac{E(q^2)E(q^{10})^3}{E(q)^3E(q^5)}\right)}^4\Bigg),\label{BB-gf-1}\\
 &\sum_{n=0}^\infty\overline{\textrm{spt}}_\omega(50n+25)q^n=E(q^2)E(q^5)^2\Bigg(8327+28312350q\dfrac{E(q^2)E(q^{10})^3}{E(q)^3E(q^5)}\notag\\
 &\qquad+7557865625{\left(q\dfrac{E(q^2)E(q^{10})^3}{E(q)^3E(q^5)}\right)}^2
+6780273125\times10^2{\left(q\dfrac{E(q^2)E(q^{10})^3}{E(q)^3E(q^5)}\right)}^3\notag\\
 &\qquad+3072484775\times10^4{\left(q\dfrac{E(q^2)E(q^{10})^3}{E(q)^3E(q^5)}\right)}^4
+84314744\times10^7{\left(q\dfrac{E(q^2)E(q^{10})^3}{E(q)^3E(q^5)}\right)}^5\notag\\
 &\qquad+154486601\times10^8{\left(q\dfrac{E(q^2)E(q^{10})^3}{E(q)^3E(q^5)}\right)}^6
+20019467\times10^{10}{\left(q\dfrac{E(q^2)E(q^{10})^3}{E(q)^3E(q^5)}\right)}^7\notag\\
 &\qquad+18998414\times10^{11}{\left(q\dfrac{E(q^2)E(q^{10})^3}{E(q)^3E(q^5)}\right)}^8
+134691824\times10^{11}{\left(q\dfrac{E(q^2)E(q^{10})^3}{E(q)^3E(q^5)}\right)}^9\notag\\
 &\qquad+71952464\times10^{12}{\left(q\dfrac{E(q^2)E(q^{10})^3}{E(q)^3E(q^5)}\right)}^{10}
+28911776\times10^{13}{\left(q\dfrac{E(q^2)E(q^{10})^3}{E(q)^3E(q^5)}\right)}^{11}\notag\\
 &\qquad+862432\times10^{15}{\left(q\dfrac{E(q^2)E(q^{10})^3}{E(q)^3E(q^5)}\right)}^{12}
+1856\times10^{18}{\left(q\dfrac{E(q^2)E(q^{10})^3}{E(q)^3E(q^5)}\right)}^{13}\notag\\
 &\qquad+272896\times10^{16}{\left(q\dfrac{E(q^2)E(q^{10})^3}{E(q)^3E(q^5)}\right)}^{14}
+24576\times10^{17}{\left(q\dfrac{E(q^2)E(q^{10})^3}{E(q)^3E(q^5)}\right)}^{15}\notag\\
 &\qquad+1024\times10^{18}{\left(q\dfrac{E(q^2)E(q^{10})^3}{E(q)^3E(q^5)}\right)}^{16}\Bigg),\label{BB-gf-2}
\end{align}
Based on \eqref{BB-gf-1} and \eqref{BB-gf-2}, Baruah and Begum \cite[Corollary 1.9]{BB20} proved several congruences and internal
congruences modulo small powers of $5$ for $\overline{\textrm{spt}}_\omega(n)$. For example, the proved that for any $k\geq0$ and $n\geq0$,
\begin{align*}
\overline{\textrm{spt}}_\omega{\left(5^{2k+5}(10n+5)\right)} &\equiv\overline{\textrm{spt}}_\omega{\left(5^5(10n+5)\right)}\pmod{5^6},\\
\overline{\textrm{spt}}_\omega{\left(5^6(10n+3)\right)} &\equiv\overline{\textrm{spt}}_\omega{\left(5^6(10n+7)\right)}
\equiv0\pmod{5^6}.
\end{align*}

Next, Baruah and Begum posed the following three families of congruences.
\begin{conjecture}\label{BB-Conj}
For any $\ell\geq1$, $k\geq0$ and $n\geq0$,
\begin{align*}
\overline{\textrm{spt}}_\omega{\left(5^{2k+\ell-1}(10n+5)\right)} &\equiv
\overline{\textrm{spt}}_\omega{\left(5^{\ell-1}(10n+5)\right)}\pmod{5^\ell},\\
\overline{\textrm{spt}}_\omega{\left(5^{2\ell}(10n+3)\right)} &\equiv\overline{\textrm{spt}}_\omega{\left(5^{2\ell}(10n+7)\right)}
\equiv0\pmod{5^{2\ell}}.
\end{align*}
\end{conjecture}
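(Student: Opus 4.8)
The plan is to recast each of the three families as a statement about the coefficient-selection operators $U_{5,r}\colon\sum_n c_nq^n\mapsto\sum_n c_{5n+r}q^n$ $(0\le r\le4)$ applied repeatedly to \eqref{BB-gf-1}--\eqref{BB-gf-2}, and then to propagate a single ``shape and $5$-adic divisibility'' statement by induction on the power of $5$. Put $\xi:=q\,E(q^2)E(q^{10})^3/\bigl(E(q)^3E(q^5)\bigr)$ and, for $k\ge0$, $\mathcal{L}_k(q):=\sum_{n\ge0}\overline{\textrm{spt}}_\omega\!\left(5^k(10n+5)\right)q^n$. From $5^{k+1}(10n+5)=5^{k}\bigl(10(5n+2)+5\bigr)$ we read off $\mathcal{L}_{k+1}=U_{5,2}\mathcal{L}_k$, and from $5^{2\ell}(10n+3)=5^{2\ell-1}\bigl(10(5n+1)+5\bigr)$ and $5^{2\ell}(10n+7)=5^{2\ell-1}\bigl(10(5n+3)+5\bigr)$ that $\sum_n\overline{\textrm{spt}}_\omega\!\left(5^{2\ell}(10n+3)\right)q^n=U_{5,1}\mathcal{L}_{2\ell-1}$ and $\sum_n\overline{\textrm{spt}}_\omega\!\left(5^{2\ell}(10n+7)\right)q^n=U_{5,3}\mathcal{L}_{2\ell-1}$. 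By \eqref{BB-gf-1} and \eqref{BB-gf-2}, $\mathcal{L}_0=E(q)^2E(q^{10})\,f_0(\xi)$ and $\mathcal{L}_1=E(q^2)E(q^5)^2\,g_0(\xi)$ with $f_0,g_0\in\mathbb{Z}[X]$ explicit; straight from those polynomials, the constant term of $f_0$ is $18\equiv3\pmod5$, every other coefficient of $f_0$ is divisible by $5$, and every non-constant coefficient of $g_0$ is divisible by $25$. Finally, since $E(q)^2E(q^{10})$ and $E(q^2)E(q^5)^2$ are units in $\mathbb{Z}[[q]]$ and $\xi=q+\cdots$, the first family of Conjecture~\ref{BB-Conj} will follow from (indeed is equivalent to) $\mathcal{L}_e\equiv\mathcal{L}_{e+2}\pmod{5^{e+1}}$ for all $e\ge0$: the case $k=1$ of the conjecture is this with $e=\ell-1$, and the general $k$ then follows by telescoping since the moduli only grow.

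The technical core is a family of $U_5$-dissection identities. The function $\xi$ is (a normalisation of) a Hauptmodul for $\Gamma_0(10)$, on whose field of modular functions $U_5$ acts, and accordingly I would establish that, for every $j\ge0$,
\begin{align*}
U_{5,2}\!\left(E(q)^2E(q^{10})\,\xi^j\right)&=E(q^2)E(q^5)^2\,\textstyle\sum_i A_{ij}\,\xi^i,\\
U_{5,2}\!\left(E(q^2)E(q^5)^2\,\xi^j\right)&=E(q)^2E(q^{10})\,\textstyle\sum_i B_{ij}\,\xi^i,\\
U_{5,r}\!\left(E(q^2)E(q^5)^2\,\xi^j\right)&=E(q)^2E(q^{10})\,\textstyle\sum_i C^{(r)}_{ij}\,\xi^i\quad(r=1,3),
\end{align*}
with all coefficients in $\mathbb{Z}$, all sums finite, degrees growing at most linearly in $j$, and the following structural features: (i) $A_{i0}=B_{i0}=0$ for $i\ge1$ (a constant input produces only a constant output); (ii) $A_{00}=B_{00}=-1$, so $A_{00}B_{00}=1$; (iii) $A_{ij}\equiv B_{ij}\equiv0\pmod5$ whenever $i\ge1$ and $j\ge1$; (iv) $U_{5,1}\!\left(E(q^2)E(q^5)^2\right)=U_{5,3}\!\left(E(q^2)E(q^5)^2\right)=0$, i.e.\ $C^{(r)}_{i0}=0$ for all $i$. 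Feature (iv) is immediate from the pentagonal-number expansion of $E(q^2)$, whose exponents all lie in $\{0,2,4\}\pmod5$; (i) and (ii) are leading-order computations at the cusps; and (iii)---the extra power of $5$---is the decisive non-trivial point. I would prove these identities for small $j$ by a finite computation (for instance, after clearing the poles at the cusps of $\Gamma_0(50)$, as an identity between holomorphic modular forms verified up to a Sturm bound, or through the classical $5$-dissections built from the Rogers--Ramanujan functions) and then obtain all $j$ from the polynomial relation between $\xi(\tau)$ and $\xi(5\tau)$. \textbf{Establishing these dissection identities, and in particular the $5$-adic strengthening (iii), is where I expect the real work to lie.}

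Granting the dissection identities, write $\mathcal{L}_{2k}=E(q)^2E(q^{10})f_k(\xi)$ and $\mathcal{L}_{2k+1}=E(q^2)E(q^5)^2g_k(\xi)$; applying $U_{5,2}$ turns the passage $f_k\mapsto g_k\mapsto f_{k+1}$ into the action of the integer matrices $A$ and $B$ on coefficient vectors. I would then prove by induction on $k$ that: (a) the constant term $a_{k,0}:=[\xi^0]f_k$ satisfies $a_{k,0}\equiv3\pmod5$; (b) $v_5\bigl([\xi^i]f_k\bigr)\ge2k+1$ and $v_5\bigl([\xi^i]g_k\bigr)\ge2k+2$ for all $i\ge1$; (c) $f_{k+1}\equiv f_k\pmod{5^{2k+1}}$ and $g_{k+1}\equiv g_k\pmod{5^{2k+2}}$. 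The base case $k=0$ is the explicit information about $f_0,g_0$ above. For the inductive step, (b) propagates because, by (i), every $[\xi^i]f_{k+1}$ and $[\xi^i]g_{k+1}$ with $i\ge1$ is a $\mathbb{Z}$-combination only of the $[\xi^j]$ with $j\ge1$ of the previous polynomial, picking up a further factor of $5$ from (iii). For (c) everything reduces to the constant terms: with $b_{k,0}:=[\xi^0]g_k$, $\varepsilon_k:=\sum_{j\ge1}A_{0j}[\xi^j]f_k$, $\delta_k:=\sum_{j\ge1}B_{0j}[\xi^j]g_k$ one has $b_{k,0}=-a_{k,0}+\varepsilon_k$ and $a_{k+1,0}=-b_{k,0}+\delta_k$ with $v_5(\varepsilon_k)\ge2k+1$, $v_5(\delta_k)\ge2k+2$ by (b), whence, using $A_{00}B_{00}=1$, $a_{k+1,0}-a_{k,0}=-\varepsilon_k+\delta_k$ and $b_{k+1,0}-b_{k,0}=-\delta_k+\varepsilon_{k+1}$, of $5$-adic valuation $\ge2k+1$ and $\ge2k+2$ respectively, which together with (b) gives (c); and $a_{k+1,0}\equiv-b_{k,0}\equiv a_{k,0}\pmod5$ preserves (a).

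The three families then drop out. First, (c) telescopes to $\mathcal{L}_e\equiv\mathcal{L}_{e+2}\pmod{5^{e+1}}$ for all $e\ge0$, which is the first family of Conjecture~\ref{BB-Conj}. Second, fix $\ell\ge1$ and apply $U_{5,1}$ to $\mathcal{L}_{2\ell-1}=E(q^2)E(q^5)^2\,g_{\ell-1}(\xi)$: by (iv) the constant term $[\xi^0]g_{\ell-1}$ contributes nothing, and the third dissection identity rewrites the rest as $E(q)^2E(q^{10})$ times a polynomial in $\xi$ each of whose coefficients is a $\mathbb{Z}$-combination of the $[\xi^i]g_{\ell-1}$ with $i\ge1$, hence divisible by $5^{2(\ell-1)+2}=5^{2\ell}$ by (b); thus $\overline{\textrm{spt}}_\omega\!\left(5^{2\ell}(10n+3)\right)\equiv0\pmod{5^{2\ell}}$, and the same argument with $U_{5,3}$ in place of $U_{5,1}$ gives $\overline{\textrm{spt}}_\omega\!\left(5^{2\ell}(10n+7)\right)\equiv0\pmod{5^{2\ell}}$. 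This would complete the proof of Conjecture~\ref{BB-Conj}.
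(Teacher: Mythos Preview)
Your overall architecture matches the paper's: write each $\mathcal{L}_k$ as a polynomial in $\xi$ times $E(q)^2E(q^{10})$ or $E(q^2)E(q^5)^2$, encode $U_{5,2}$ as multiplication of the coefficient vector by an integer matrix, and run a $5$-adic induction. However, your structural claim (iii), that $A_{ij}\equiv B_{ij}\equiv0\pmod5$ whenever $i\ge1$ and $j\ge1$, is \emph{false}. In the paper's notation (where $\alpha_{i,j}$ is the coefficient of $\xi^{j-1}$ in $U(q^{-2}\gamma\xi^{i-1})/\delta$), the recurrence \eqref{Rec-CH-app} and the initial values $\alpha_{1,1}=-1$, $\alpha_{2,1}=1$, $\alpha_{3,1}=1$, $\alpha_{4,1}=\alpha_{5,1}=0$ give $\alpha_{6,2}=85\cdot1+15\cdot1+1\cdot(-1)=99$, which is your $A_{1,5}$; similarly $\beta_{6,2}=85\cdot1+15\cdot3+1\cdot(-1)=129$ is your $B_{1,5}$. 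Neither is divisible by $5$. The recursion coefficient $E=\xi+20\xi^2+\cdots$ has a $\xi$-term with no factor of $5$, so the ``extra factor of $5$'' you want simply does not appear for inputs of degree $\ge5$.

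Because (iii) fails, your flat induction hypothesis (b) does not propagate. For instance, passing from $g_0$ to $f_1$, the contribution $B_{1,5}\,[\xi^5]g_0$ to $[\xi^1]f_1$ would, under your bound $v_5([\xi^5]g_0)\ge2$ and $v_5(B_{1,5})=0$, yield only $v_5\ge2$, not the $\ge3$ you need. The reason the conclusion is nonetheless true is that the $5$-adic valuation of $[\xi^j]g_0$ actually \emph{grows with $j$}; the paper makes this precise by proving $\nu(\alpha_{i,j})\ge\lfloor(5j-i-1)/6\rfloor$ and $\nu(\beta_{i,j})\ge\lfloor(5j-i-2)/6\rfloor$, and then tracking the $i$-graded bound $\nu(x_{2m-1,i})\ge 2m-1+\lfloor(5i-10)/6\rfloor$ (and its even analogue) through the induction. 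If you replace your (iii) and (b) by such $j$-dependent bounds---which is exactly ``the real work'' you flagged---your outline goes through; in its present form the key step is a gap.
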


The object of this paper is to give a positive answer to Conjecture \ref{BB-Conj}. We prove the following general result.

\begin{theorem}\label{Tang-THM}
For any $m\geq1$,
\begin{align}
\sum_{n=0}^\infty\overline{\emph{spt}}_\omega{\left(2\times5^{2m-1}n+5^{2m-1}\right)}q^n &=\gamma\sum_{i=1}^\infty
x_{2m-1,i}\xi^{i-1},\label{gf-fam-odd}\\
\sum_{n=0}^\infty\overline{\emph{spt}}_\omega{\left(2\times5^{2m}n+5^{2m}\right)}q^n &=\delta\sum_{i=1}^\infty
x_{2m,i}\xi^{i-1},\label{gf-fam-even}
\end{align}
where
\begin{align}
\gamma=E(q)^2E(q^{10}),\qquad\delta=E(q^2)E(q^5)^2,\qquad\xi=q\dfrac{E(q^2)E(q^{10})^3}{E(q)^3E(q^5)},\label{three-defs}
\end{align}
and where the coefficient vectors $\emph{\textbf{x}}_m=(x_{m,1},x_{m,2},\ldots)$ are given recursively by
\begin{align*}
\emph{\textbf{x}}_1=(18,720,7625,32500,50000,0,0,\ldots),
\end{align*}
and for any $m\geq1$,
\begin{align*}
\emph{\textbf{x}}_{2m} &=\emph{\textbf{x}}_{2m-1}A,\\
\emph{\textbf{x}}_{2m+1} &=\emph{\textbf{x}}_{2m}B,
\end{align*}
where $A$ is the matrix $(\alpha_{i,j})_{i,j\geq1}$ and $B$ is the matrix $(\beta_{i,j})_{i,j\geq1}$, where $\alpha_{i,j}$
and $\beta_{i,j}$ are given by
\begin{align*}
\sum_{i=1}^\infty\sum_{j=1}^\infty\alpha_{i,j}x^iy^j &=\dfrac{N_\alpha}{D},\\
\sum_{i=1}^\infty\sum_{j=1}^\infty\beta_{i,j}x^iy^j &=\dfrac{N_\beta}{D},
\end{align*}
where
\begin{align}
N_\alpha &=-yx+\big(y+210y^2+4300y^3+34000y^4+120000y^5+160000y^6\big)x^2\notag\\
 &\quad+\big(y+180y^2+3575y^3+27500y^4+94000y^5+120000y^6\big)x^3\notag\\
 &\quad+\big(50y^2+1000y^3+7450y^4+24500y^5+30000y^6\big)x^4\notag\\
 &\quad+\big(5y^2+95y^3+675y^4+2125y^5+2500y^6\big)x^5,\label{N-alpha}\\
N_\beta &=-yx+\big(3y+320y^2+5520y^3+39200y^4+128000y^5+160000y^6\big)x^2\notag\\
 &\quad+\big(y+226y^2+4185y^3+30200y^4+98000y^5+120000y^6\big)x^3\notag\\
 &\quad+\big(56y^2+1080y^3+7800y^4+25000y^5+30000y^6\big)x^4\notag\\
 &\quad+\big(5y^2+95y^3+675y^4+2125y^5+2500y^6\big)x^5\label{N-beta}
\end{align}
and
\begin{align}
D &=1-\big(205y+4300y^2+34000y^3+120000y^4+160000y^5\big)x\notag\\
 &\quad-\big(215y+4475y^2+35000y^3+122000y^4+160000y^5\big)x^2\notag\\
 &\quad-\big(85y+1750y^2+13525y^3+46500y^4+60000y^5\big)x^3\notag\\
 &\quad-\big(15y+305y^2+2325y^3+7875y^4+10000y^5\big)x^4\notag\\
 &\quad-\big(y+20y^2+150y^3+500y^4+625y^5\big)x^5.\label{D-HS}
\end{align}
\end{theorem}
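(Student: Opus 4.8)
The plan is to reduce the two generating-function identities \eqref{gf-fam-odd}--\eqref{gf-fam-even} to a single lemma describing how a Hecke-type operator acts on the functions $\gamma\xi^{\,i-1}$ and $\delta\xi^{\,i-1}$, and then to induct on $m$. First I would record the elementary fact that, for every $M\ge1$,
\[
2\times 5^{M+1}n+5^{M+1}=2\times 5^{M}(5n+2)+5^{M}.
\]
Hence, writing $\mathcal{U}f:=\sum_{n\ge0}\bigl([q^{5n+2}]f\bigr)q^{n}$ for the operator that selects the progression $5n+2$ and rescales $q^{5}\mapsto q$, the left-hand side of the identity at ``level $M+1$'' is precisely $\mathcal{U}$ applied to the left-hand side at ``level $M$''. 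Since \eqref{BB-gf-1} is literally \eqref{gf-fam-odd} with $m=1$ (recall $\gamma=E(q)^{2}E(q^{10})$ and $\xi$ from \eqref{three-defs}, and $\mathbf{x}_{1}=(18,720,7625,32500,50000,0,\ldots)$), the whole theorem follows by induction once one proves the following. \emph{Key Lemma:} for every $i\ge1$,
\begin{align*}
\mathcal{U}\bigl(\gamma\xi^{\,i-1}\bigr)&=\delta\sum_{j\ge1}\alpha_{i,j}\,\xi^{\,j-1},\\
\mathcal{U}\bigl(\delta\xi^{\,i-1}\bigr)&=\gamma\sum_{j\ge1}\beta_{i,j}\,\xi^{\,j-1},
\end{align*}
where $\alpha_{i,j},\beta_{i,j}$ are the coefficients defined through \eqref{N-alpha}--\eqref{D-HS}, and where for each fixed $i$ only finitely many of them are nonzero, so that $A$ and $B$ have finite rows and the recursion makes sense. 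Granting the Key Lemma, applying $\mathcal{U}$ to \eqref{gf-fam-odd} and using $\mathbb{Q}$-linearity turns the coefficient vector $\mathbf{x}_{2m-1}$ into $\mathbf{x}_{2m-1}A=\mathbf{x}_{2m}$, which is \eqref{gf-fam-even}; applying $\mathcal{U}$ to \eqref{gf-fam-even} turns $\mathbf{x}_{2m}$ into $\mathbf{x}_{2m}B=\mathbf{x}_{2m+1}$. As a by-product this forces $\mathbf{x}_{1}A$ to be the explicit coefficient vector of \eqref{BB-gf-2}, since both sides then equal $\mathcal{U}$ of the right-hand side of \eqref{BB-gf-1}; thus no separate verification of \eqref{BB-gf-2} is needed.

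To prove the Key Lemma I would pass to generating functions in an auxiliary variable $x$. Because $\sum_{i,j\ge1}\alpha_{i,j}x^{i}y^{j}=N_\alpha/D$ (and likewise for $\beta$), summing $x^{i}$ times the identities above shows that the Key Lemma is equivalent to the two identities in $\mathbb{Q}[[q]][[x]]$
\begin{align*}
\mathcal{U}\!\left(\frac{\gamma x}{1-x\xi}\right)&=\frac{\delta}{\xi}\cdot\frac{N_\alpha(x,\xi)}{D(x,\xi)},\\
\mathcal{U}\!\left(\frac{\delta x}{1-x\xi}\right)&=\frac{\gamma}{\xi}\cdot\frac{N_\beta(x,\xi)}{D(x,\xi)}.
\end{align*}
These package two assertions: first, that each $\mathcal{U}$-image is rational in $x$ with the prescribed common denominator $D(x,\xi)$; and second, the precise shape of the numerators $N_\alpha,N_\beta$.

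The natural setting is the modular curve $X_{0}(10)$, which has genus $0$. One checks by Ligozat's criteria that $\xi=\eta(2\tau)\,\eta(10\tau)^{3}/\bigl(\eta(\tau)^{3}\eta(5\tau)\bigr)$ is a modular function on $\Gamma_{0}(10)$ (a Hauptmodul, up to normalization), that $\gamma$ and $\delta$ are---apart from the harmless factor $q^{-1/2}$---holomorphic modular forms of weight $3/2$ on $\Gamma_{0}(10)$ with a common multiplier system, and that $\mathcal{U}$ differs from the Atkin operator $U_{5}$ only by a shift of $q$-exponents and, like $U_{5}$, carries the $\mathbb{Q}[\xi]$-modules $\gamma\cdot\mathbb{Q}[\xi]$ and $\delta\cdot\mathbb{Q}[\xi]$ into one another. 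Writing $U_{5}$ as an average over the five substitutions $\tau\mapsto(\tau+k)/5$, the image $\mathcal{U}\bigl(\gamma x/(1-x\xi)\bigr)$ becomes a sum of five rational functions of $x$; clearing denominators introduces the symmetric product $\prod_{k=0}^{4}\bigl(1-x\,\xi^{(k)}\bigr)$, and since the conjugates $\xi^{(k)}$ are the roots of a degree-$5$ polynomial over $\mathbb{Q}(\xi)$ (the modular equation for $\xi$ attached to the degree-$5$ cover $X_{0}(50)\to X_{0}(10)$), this product is a polynomial in $x$ of degree $5$ with coefficients in $\mathbb{Q}[\xi]$---precisely $D(x,\xi)$. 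It then only remains to identify the numerators: both sides of each identity in the Key Lemma lie in $\delta\cdot\mathbb{Q}[\xi]$ (resp. $\gamma\cdot\mathbb{Q}[\xi]$), so by the valence formula on $X_{0}(10)$---with the requisite precision read off from the orders of $\gamma$, $\delta$, $\xi$ at the four cusps $\infty,0,1/2,1/5$---the equalities hold as soon as a computable number of leading $q$-coefficients match, and this finite computation pins down $N_\alpha$ and $N_\beta$. (Alternatively, one can sidestep modular forms entirely and obtain the same identities by substituting the classical $5$-dissections of $E(q)^{\pm1}$ and $E(q^{2})^{\pm1}$ and extracting the $q^{5n+2}$ part of each eta-quotient $\gamma\xi^{\,i-1}$ directly; this route is self-contained but considerably longer.)

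With the Key Lemma in hand the theorem follows: the base case is \eqref{BB-gf-1}, and the inductive step applies $\mathcal{U}$ and the Key Lemma to pass from level $2m-1$ to $2m$ (via $A$) and from level $2m$ to $2m+1$ (via $B$). The main obstacle is the Key Lemma, and within it the sharp point is the determination of the common denominator $D$---equivalently, the correct degree-$5$ modular equation for $\xi$ under $q\mapsto q^{5}$---together with the certification of the finitely many base identities, which requires computing $q$-expansions to the precision dictated by the valence formula and the cusp data on $X_{0}(10)$. Once $A$ and $B$ are established in the stated form, the induction closes at once, and the congruences of Conjecture~\ref{BB-Conj} reduce to an analysis of the reductions of $A$ and $B$ modulo powers of $5$.
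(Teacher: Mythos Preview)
Your proposal is correct and the overall architecture matches the paper's proof exactly: the same operator $\mathcal{U}$ (the paper writes it as $f\mapsto U(q^{-2}f)$ with the Atkin $U$-operator), the same Key Lemma
\[
U(q^{-2}\gamma\xi^{i-1})=\delta\sum_{j}\alpha_{i,j}\xi^{j-1},\qquad U(q^{-2}\delta\xi^{i-1})=\gamma\sum_{j}\beta_{i,j}\xi^{j-1},
\]
and the same induction with base case \eqref{BB-gf-1}. The only substantive difference is in how the Key Lemma is established. The paper does it concretely: it quotes the modular equation \eqref{Mod-eq-CH} for $\xi$ over $\mathbb{Q}[\xi(q^{5})]$ from Chern--Hirschhorn, which immediately gives the five-term recurrence and hence the common denominator $D$, and then verifies the five initial values $v_{1},\ldots,v_{5}$ (and $w_{1},\ldots,w_{5}$) by explicit $5$-dissection identities cited from \cite{CT21,CH19}; the rational form $N_{\alpha}/D$ then drops out by solving the linear recurrence for $G=\sum v_{i}x^{i}$. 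Your route packages the same content in modular-forms language on $X_{0}(10)$: the product $\prod_{k}(1-x\xi^{(k)})$ \emph{is} the Chern--Hirschhorn modular equation (so your $D$ and theirs coincide), and your valence-formula check of the numerators is equivalent to their five explicit initial evaluations. Your framing is a bit more conceptual and explains \emph{why} a common degree-$5$ denominator must appear; the paper's is more self-contained computationally and avoids invoking half-integral-weight modularity or cusp analysis. Either way the finite verification is unavoidable, and once the Key Lemma is in hand the induction is identical.
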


As a consequence of Theorem \ref{Tang-THM}, we obtain
\begin{corollary}\label{Pro-conj}
For any $\ell\geq1$, $k\geq0$ and $n\geq0$,
\begin{align}
\overline{\emph{spt}}_\omega{\left(5^{2k+\ell-1}(10n+5)\right)} &\equiv
\overline{\emph{spt}}_\omega{\left(5^{\ell-1}(10n+5)\right)}\pmod{5^\ell},\label{BB-conj-1}\\
\overline{\emph{spt}}_\omega{\left(5^{2\ell}(10n+3)\right)} &\equiv\overline{\emph{spt}}_\omega{\left(5^{2\ell}(10n+7)\right)}
\equiv0\pmod{5^{2\ell}}.\label{BB-conj-2}
\end{align}
\end{corollary}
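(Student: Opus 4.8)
We sketch how Corollary~\ref{Pro-conj} follows from Theorem~\ref{Tang-THM}. Write $\nu_5$ for the $5$-adic valuation and set $G_m=\sum_{i\geq1}x_{m,i}\xi^{i-1}$, so that \eqref{gf-fam-odd} and \eqref{gf-fam-even} say that $\sum_{n\geq0}\overline{\textrm{spt}}_\omega(2\cdot5^{m}n+5^{m})q^n$ equals $\gamma G_m$ for odd $m$ and $\delta G_m$ for even $m$. The first step is bookkeeping: from the identities $5^{\ell-1}(10n+5)=2\cdot5^{\ell}n+5^{\ell}$, $5^{2\ell}(10n+3)=2\cdot5^{2\ell}(5n+1)+5^{2\ell}$ and $5^{2\ell}(10n+7)=2\cdot5^{2\ell}(5n+3)+5^{2\ell}$ one reads off that the two sides of \eqref{BB-conj-1} are the coefficient of $q^n$ in $\gamma G_{2k+\ell}$ (resp.\ $\delta G_{2k+\ell}$) and in $\gamma G_{\ell}$ (resp.\ $\delta G_{\ell}$)---with the \emph{same} prefactor on both sides since $2k+\ell\equiv\ell\pmod 2$---while the two left sides of \eqref{BB-conj-2} are the coefficients of $q^{5n+1}$ and $q^{5n+3}$ in $\delta G_{2\ell}$. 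Since $\gamma,\delta,\xi\in\mathbb{Z}[[q]]$ with $\xi\in q+q^2\mathbb{Z}[[q]]$, it therefore suffices to prove
\begin{align*}
\mathbf{x}_{2k+\ell}\equiv\mathbf{x}_{\ell}\pmod{5^{\ell}}\qquad(\ell\geq1,\ k\geq0),
\end{align*}
together with the divisibility by $5^{2\ell}$ of the $q^{5n+1}$- and $q^{5n+3}$-coefficients of $\delta G_{2\ell}$.

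The heart of the proof is the estimate
\begin{align*}
\nu_5(x_{m,i})\geq m\qquad(m\geq1,\ i\geq2).
\end{align*}
I would prove it by induction on $m$ using the recursions $\mathbf{x}_{2m}=\mathbf{x}_{2m-1}A$ and $\mathbf{x}_{2m+1}=\mathbf{x}_{2m}B$; however the flat bound $\nu_5\geq m$ is not visibly stable under one step, since $\alpha_{i,j}$ and $\beta_{i,j}$ are $5$-adic units whenever $5j-i\in\{2,3,4\}$, so one must instead propagate a sharper, $i$-dependent lower bound (presumably with different shapes for odd and even levels). This in turn rests on lower bounds for $\nu_5(\alpha_{i,j})$ and $\nu_5(\beta_{i,j})$ that grow as $j$ outpaces $i$; these are extracted from the defining relations $D\,\sum_{i,j}\alpha_{i,j}x^iy^j=N_\alpha$ and $D\,\sum_{i,j}\beta_{i,j}x^iy^j=N_\beta$ together with the readily computed $5$-adic valuations of the coefficients of $N_\alpha$, $N_\beta$ and $D$ displayed in \eqref{N-alpha}, \eqref{N-beta} and \eqref{D-HS}. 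For example, modulo $5$ one has $N_\alpha\equiv xy(x^2+x-1)$ and $D\equiv 1-x^5y$, whence $\alpha_{i,j}\equiv 0\pmod 5$ unless $5j-i\in\{2,3,4\}$; refining the computation modulo $25$, $125,\dots$ then yields the decay required for the induction, and likewise for $B$. The base cases $m=1,2$ amount to checking that $\nu_5(x_{1,i})$ and $\nu_5(x_{2,i})$ (the latter read off from \eqref{BB-gf-2}) satisfy the sharpened bound.

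Granting the estimate, both parts of Corollary~\ref{Pro-conj} follow quickly. For \eqref{BB-conj-1}: the $x^1$-coefficient of $N_\alpha/D$ and of $N_\beta/D$ equals $-y$, so the first row of each of $A$ and $B$ is $-\mathbf{e}_1$, and hence $\mathbf{e}_1AB=\mathbf{e}_1BA=\mathbf{e}_1$. Writing $\mathbf{x}_{m+2}=\mathbf{x}_mP$ with $P\in\{AB,BA\}$ (a matrix with integer entries), the $j$th coordinate of $\mathbf{x}_{m+2}-\mathbf{x}_m=\mathbf{x}_m(P-I)$ is the finite sum $\sum_{i\geq2}x_{m,i}(P-I)_{i,j}$ (the $i=1$ term drops because the first row of $P$ is $\mathbf{e}_1$), which is divisible by $5^m$ by the estimate; thus $\mathbf{x}_{m+2}\equiv\mathbf{x}_m\pmod{5^m}$, and chaining $\mathbf{x}_\ell\equiv\mathbf{x}_{\ell+2}\equiv\cdots\equiv\mathbf{x}_{\ell+2k}$ (each link valid modulo a power of $5$ no smaller than $5^\ell$) gives $\mathbf{x}_{2k+\ell}\equiv\mathbf{x}_\ell\pmod{5^\ell}$. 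For \eqref{BB-conj-2}: the estimate gives $G_{2\ell}\equiv x_{2\ell,1}\pmod{5^{2\ell}}$, hence $\delta G_{2\ell}\equiv x_{2\ell,1}\,\delta\pmod{5^{2\ell}}$; but $\delta=E(q^2)E(q^5)^2$ has no $q^{5n+1}$ or $q^{5n+3}$ term, because $E(q^5)^2$ is a power series in $q^5$ while, by Euler's pentagonal number theorem, the exponents occurring in $E(q^2)$ are the integers $k(3k-1)$, which are congruent to $0$, $2$ or $4$ modulo $5$. Extracting those two residue classes gives $\overline{\textrm{spt}}_\omega(5^{2\ell}(10n+3))\equiv\overline{\textrm{spt}}_\omega(5^{2\ell}(10n+7))\equiv0\pmod{5^{2\ell}}$.

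The main obstacle is the valuation estimate---concretely, identifying the correct sharpened lower bound for $\nu_5(x_{m,i})$ and checking that it is at once strong enough to force $\nu_5(x_{m,i})\geq m$ for $i\geq2$, consistent with the numerics of $\mathbf{x}_1$ and $\mathbf{x}_2$, and preserved by both the $A$-step and the $B$-step. The subtlety is that purely entrywise bounds on $A$ and $B$ do not close the induction: cancellation inside the rational functions $N_\alpha/D$ and $N_\beta/D$ genuinely matters, so the argument comes down to a careful but essentially finite analysis of the $5$-adic valuations of the coefficients in \eqref{N-alpha}, \eqref{N-beta} and \eqref{D-HS} and of how the two steps interleave.
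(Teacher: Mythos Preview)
Your proposal is correct and, once the valuation estimate is in hand, takes a genuinely different route from the paper in the final deduction. Both arguments rest on the same core fact $\nu_5(x_{m,i})\geq m$ for $i\geq 2$; the paper obtains this via the explicit sharpened bounds $\nu(\alpha_{i,j})\geq\lfloor(5j-i-1)/6\rfloor$, $\nu(\beta_{i,j})\geq\lfloor(5j-i-2)/6\rfloor$ and $\nu(x_{m,i})\geq m+\lfloor(5i-10)/6\rfloor$ (with a $+1$ correction at $i=3$ on even levels), proved by induction on $i$ via the five-term recurrence~\eqref{Rec-CH}---so purely entrywise bounds \emph{do} close the induction once the right $i$-dependent hypothesis is identified, and no cancellation is needed. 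Where the paths diverge is afterwards: for \eqref{BB-conj-1} the paper applies the $U$-operator twice to $x_{2m,1}\delta$ (resp.\ $x_{2m-1,1}\gamma$) using the 5-dissection $E(q)=E(q^{25})\bigl(R(q^5)^{-1}-q-q^2R(q^5)\bigr)$ to show that two applications return one to the starting series modulo $5^{2m}$, whereas you bypass this entirely with the clean algebraic observation that the first row of $A$ and $B$ is $-\mathbf{e}_1$, whence $\mathbf{e}_1AB=\mathbf{e}_1BA=\mathbf{e}_1$ and $\mathbf{x}_{m+2}\equiv\mathbf{x}_m\pmod{5^m}$ drops out immediately. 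For \eqref{BB-conj-2} the paper again invokes the Rogers--Ramanujan dissection of $E(q^2)$, while your pentagonal-number argument that $k(3k-1)\in\{0,2,4\}\pmod 5$ is more elementary and equally effective. Your route is shorter and avoids the $R(q)$ machinery; the paper's route is more explicit about what the $5$-dissected generating functions actually look like.
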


\section{Initial and general relations}
First, the Atkin $U$-operator \cite{Atkin68} is defined by
\begin{align*}
U{\left(\sum_{n=n_0}^\infty a(n)q^n\right)}=\sum_{n=n_0}^\infty a(5n)q^n.
\end{align*}
Chern and Hirschhorn \cite[Theorem 4.1]{CH19} derived the following modular equation involving $\xi$:
\begin{align}
\xi^5 &-\big(205X+4300X^2+34000X^3+120000X^4+160000X^6\big)\xi^4\notag\\
 &-\big(215X+4475X+35000X^3+122000X^4+160000X^5\big)\notag\xi^3\notag\\
 &-\big(85X+1750X^2+13525X^3+46500X^4+60000X^5\big)\xi^2\notag\\
 &-\big(15X+305X^2+2325X^3+7875X^4+10000X^5\big)\xi\notag\\
 &-\big(X+20X^2+150X^3+500X^4+625X^5\big)=0,\label{Mod-eq-CH}
\end{align}
where $X=\xi(q^5)$ and $\xi$ is given in \eqref{three-defs}. Based on \eqref{Mod-eq-CH}, they further proved that for any
$i\geq6$, $u_i=U(\xi^i)$ satisfies the recurrence
\begin{align}
u_i &=\big(205\xi+4300\xi^2+34000\xi^3+120000\xi^4+160000\xi^5\big)u_{i-1}\notag\\
 &\quad+\big(215\xi+4475\xi^2+35000\xi^3+122000\xi^4+160000\xi^5\big)u_{i-2}\notag\\
 &\quad+\big(85\xi+1750\xi^2+13525\xi^3+46500\xi^4+60000\xi^5\big)u_{i-3}\notag\\
 &\quad+\big(15\xi+305\xi^2+2325\xi^3+7875\xi^4+10000\xi^5\big)u_{i-4}\notag\\
 &\quad+\big(\xi+20\xi^2+150\xi^3+500\xi^4+625\xi^5\big)u_{i-5}.\label{Rec-CH}
\end{align}

On the other hand, we find that
\begin{align}
U(q^{-2}\gamma) &=-\delta,\label{ini-val-alpha-1}\\
U(q^{-2}\gamma\xi) &=\delta(1+5\xi),\label{ini-val-alpha-2}\\
U(q^{-2}\gamma\xi^2) &=\delta\big(1+170\xi+4425\xi^2+48000\xi^3+262000\xi^4\notag\\
 &\quad+720000\xi^5+800000\xi^6\big),\label{ini-val-alpha-3}\\
U(q^{-2}\gamma\xi^3) &=\delta\big(385\xi+43950\xi^2+1723425\xi^3+35042500\xi^4\notag\\
 &\quad+431860000\xi^5+3465200000\xi^6+18636000000\xi^7\notag\\
 &\quad+67040000000\xi^8+155520000000\xi^9\notag\\
 &\quad+211200000000\xi^{10}+128000000000\xi^{11}\big)\label{ini-val-alpha-4}
\end{align}
and
\begin{align}
U(q^{-2}\gamma\xi^4) &=\delta\big(290\xi+121915\xi^2+12433000\xi^3+591679375\xi^4\notag\\
 &\quad+16582130000\xi^5+306720700000\xi^6+3991780000000\xi^7\notag\\
 &\quad+37953252000000\xi^8+269282560000000\xi^9\notag\\
 &\quad+1438912000000000\xi^{10}+5782272000000000\xi^{11}\notag\\
 &\quad+17248640000000000\xi^{12}+37120000000000000\xi^{13}\notag\\
 &\quad+54579200000000000\xi^{14}+49152000000000000\xi^{15}\notag\\
 &\quad+20480000000000000\xi^{16}\big).\label{ini-val-alpha-5}
\end{align}
The main ingredients in proofs of \eqref{ini-val-alpha-1}--\eqref{ini-val-alpha-5} are \cite[Theorem 1.1]{CT21} and
\cite[Eqs. (9.10) and (9.11)]{CH19}, similar treatments were also used in \cite{CH19, Tang20, Chern21}, thus we omit the details.
Interestingly, if we multiply \eqref{Mod-eq-CH} by $q^{-2}\gamma$ and apply the $U$-operator, we find that
$v_i=U(q^{-2}\gamma\xi^{i-1})$ $(i\geq6)$ satisfies the recurrence \eqref{Rec-CH}, that is,
\begin{align}
v_i &=\big(205\xi+4300\xi^2+34000\xi^3+120000\xi^4+160000\xi^5\big)v_{i-1}\notag\\
 &\quad+\big(215\xi+4475\xi^2+35000\xi^3+122000\xi^4+160000\xi^5\big)v_{i-2}\notag\\
 &\quad+\big(85\xi+1750\xi^2+13525\xi^3+46500\xi^4+60000\xi^5\big)v_{i-3}\notag\\
 &\quad+\big(15\xi+305\xi^2+2325\xi^3+7875\xi^4+10000\xi^5\big)v_{i-4}\notag\\
 &\quad+\big(\xi+20\xi^2+150\xi^3+500\xi^4+625\xi^5\big)v_{i-5}.\label{Rec-CH-app}
\end{align}

Also, we obtain
\begin{align}
U(q^{-2}\delta) &=-\gamma,\label{ini-val-beta-1}\\
U(q^{-2}\delta\xi) &=\gamma\big(3+115\xi+1220\xi^2+5200\xi^3+8000\xi^4\big),\label{ini-val-beta-2}\\
U(q^{-2}\delta\xi^2) &=\gamma\big(1+626\xi+36185\xi^2+841800\xi^3+10558000\xi^4\notag\\
 &\quad+79720000\xi^5+376000000\xi^6+1091200000\xi^7\notag\\
 &\quad+1792000000\xi^8+1280000000\xi^9\big),\label{ini-val-beta-3}\\
U(q^{-2}\delta\xi^3) &=\gamma\big(821\xi+170110\xi^2+11019925\xi^3+360515500\xi^4\notag\\
 &\quad+7171870000\xi^5+95190600000\xi^6+886763200000\xi^7\notag\\
 &\quad+5954432000000\xi^8+29100480000000\xi^9\notag\\
 &\quad+102944000000000\xi^{10}+257536000000000\xi^{11}\notag\\
 &\quad+433152000000000\xi^{12}+440320000000000\xi^{13}\notag\\
 &\quad+204800000000000\xi^{14}\big)\label{ini-val-beta-4}
\end{align}
and
\begin{align}
U(q^{-2}\delta\xi^4) &=\gamma\big(460\xi+322185\xi^2+49362850\xi^3+3387684625\xi^4\notag\\
 &\quad+134585447500\xi^5+3514184550000\xi^6+64876279000000\xi^7\notag\\
 &\quad+886105520000000\xi^8+9218371600000000\xi^9\notag\\
 &\quad+74395089600000000\xi^{10}+470590086400000000\xi^{11}\notag\\
 &\quad+2341722368000000000\xi^{12}+9141506560000000000\xi^{13}\notag\\
 &\quad+27718604800000000000\xi^{14}+64037888000000000000\xi^{15}\notag\\
 &\quad+109035520000000000000\xi^{16}+129105920000000000000\xi^{17}\notag\\
 &\quad+95027200000000000000\xi^{18}+32768000000000000000\xi^{19}\big).\label{ini-val-beta-5}
\end{align}
Similarly, multiplying \eqref{Mod-eq-CH} by $q^{-2}\delta$ and applying the operator $U$, we deduce that
$w_i=U(q^{-2}\delta\xi^{i-1})$ $(i\geq6)$ satisfies the recurrence \eqref{Rec-CH} (with $w$ for $u$).

\section{Dissections and generating functions}
First, we are ready to prove Theorem \ref{Tang-THM}.
\begin{proof}[Proof of Theorem \ref{Tang-THM}]
The identity \eqref{BB-gf-1} implies that \eqref{gf-fam-odd} holds for $m=1$. Assume that \eqref{gf-fam-odd} is true for some
$m\geq1$, that is,
\begin{align}
\sum_{n=0}^\infty\overline{\textrm{spt}}_\omega{\left(2\times5^{2m-1}n+5^{2m-1}\right)}q^{n-2} &=q^{-2}\gamma\sum_{i=1}^\infty
x_{2m-1,i}\xi^{i-1}.\label{ass-iden-1}
\end{align} 
It follows from \eqref{Rec-CH}--\eqref{ini-val-alpha-5} that for any $i\geq1$,
\begin{align}
v_i=U(q^{-2}\gamma\xi^{i-1})=\delta\sum_{j=1}^{5i}\alpha_{i,j}\xi^{j-1},\label{key-subs-1}
\end{align}
Applying the $U$-operator to \eqref{ass-iden-1} and utilizing \eqref{key-subs-1}, we find that
\begin{align*}
\sum_{n=0}^\infty\overline{\textrm{spt}}_\omega{\left(2\times5^{2m-1}(5n+2)+5^{2m-1}\right)}q^n
 &=U{\left(q^{-2}\gamma\sum_{i=1}^\infty x_{2m-1,i}\xi^{i-1}\right)}\\
 &=\sum_{i=1}^\infty x_{2m-1,i}U(q^{-2}\gamma\xi^{i-1})\\
 &=\sum_{i=1}^\infty x_{2m-1,i}\delta\sum_{j=1}^{5i}\alpha_{i,j}\xi^{j-1}\\
 &=\delta\sum_{j=1}^\infty{\left(\sum_{i=1}^\infty x_{2m-1,i}\alpha_{i,j}\right)}\xi^{j-1}\\
 &=\delta\sum_{j=1}^\infty x_{2m,j}\xi^{j-1},
\end{align*}
or, equivalently,
\begin{align*}
\sum_{n=0}^\infty\overline{\textrm{spt}}_\omega{\left(2\times5^{2m}n+5^{2m}\right)}q^n &=\delta\sum_{i=1}^\infty
x_{2m,i}\xi^{i-1}.
\end{align*}
This implies that \eqref{gf-fam-even} holds for $m$.

Now assume that \eqref{gf-fam-even} is true for some $m\geq1$, that is,
\begin{align}
\sum_{n=0}^\infty\overline{\textrm{spt}}_\omega{\left(2\times5^{2m}n+5^{2m}\right)}q^{n-2} &=q^{-2}\delta\sum_{i=1}^\infty
x_{2m,i}\xi^{i-1}.\label{ass-iden-2}
\end{align}
It follows from \eqref{Rec-CH} and \eqref{ini-val-beta-1}--\eqref{ini-val-beta-5} that for any $i\geq1$,
\begin{align}
w_i=U(q^{-2}\delta\xi^{i-1})=\gamma\sum_{j=1}^{5i}\beta_{i,j}\xi^{j-1}.\label{key-subs-2}
\end{align}
If we apply the $U$-operator to \eqref{ass-iden-2} and use \eqref{key-subs-2}, we obtain
\begin{align*}
\sum_{n=0}^\infty\overline{\textrm{spt}}_\omega{\left(2\times5^{2m}(5n+2)+5^{2m}\right)}q^n &=\sum_{i=1}^\infty x_{2m,i}
U(q^{-2}\delta\xi^{i-1})\\
 &=\sum_{i=1}^\infty x_{2m,i}\gamma\sum_{j=1}^{5i}\beta_{i,j}\xi^{j-1}\\
 &=\gamma\sum_{j=1}^\infty{\left(\sum_{i=1}^\infty x_{2m,i}\beta_{i,j}\right)}\xi^{j-1}\\
 &=\gamma\sum_{j=1}^\infty x_{2m+1,i}\xi^{j-1},
\end{align*}
or, equivalently,
\begin{align*}
\sum_{n=0}^\infty\overline{\textrm{spt}}_\omega{\left(2\times5^{2m+1}n+5^{2m+1}\right)}q^n &=\gamma\sum_{i=1}^\infty
x_{2m+1,i}\xi^{i-1}.
\end{align*}
This implies that \eqref{gf-fam-odd} holds for $m+1$. Therefore, \eqref{gf-fam-odd} and \eqref{gf-fam-even} hold for any $m\geq1$
by induction.

For convenience, we denote
\begin{align*}
A &=205\xi+4300\xi^2+34000\xi^3+120000\xi^4+160000\xi^5,\\
B &=215\xi+4475\xi^2+35000\xi^3+122000\xi^4+160000\xi^5,\\
C &=85\xi+1750\xi^2+13525\xi^3+46500\xi^4+60000\xi^5,\\
D &=15\xi+305\xi^2+2325\xi^3+7875\xi^4+10000\xi^5,\\
E &=\xi+20\xi^2+150\xi^3+500\xi^4+625\xi^5.
\end{align*}
Let $G=\sum\limits_{i=1}^\infty v_ix^i$. Then the recurrence \eqref{Rec-CH-app} implies
\begin{align*}
 &\big(1-Ax-Bx^2-Cx^3-Dx^4-Cx^5\big)G\\
 &=\sum\limits_{i=1}^\infty v_ix^i-A\sum\limits_{i=1}^\infty v_ix^{i+1}
-B\sum\limits_{i=1}^\infty v_ix^{i+2}-C\sum\limits_{i=1}^\infty v_ix^{i+3}\\
 &\quad-D\sum\limits_{i=1}^\infty v_ix^{i+4}-E\sum\limits_{i=1}^\infty v_ix^{i+5}\\
 &=v_1x+\big(v_2-Av_1\big)x^2+\big(v_3-Av_2-Bv_1\big)x^3\\
 &\quad+\big(v_4-Av_3-Bv_2-Cv_1\big)x^4+\big(v_5-Av_4-Bv_3-Cv_2-Dv_1\big)x^5,
\end{align*}
from which we obtain the representation for $G$. According to \eqref{key-subs-1}, we find that 
\begin{align*}
\sum_{i=1}^\infty\sum_{j=1}^\infty\alpha_{i,j}x^iy^j &=\dfrac{N_\alpha}{D},
\end{align*}
where $N_\alpha$ and $D$ are given, respectively, in \eqref{N-alpha} and \eqref{D-HS}. Though a similar argument, we can also
obtain \eqref{N-beta}. This finishes the proof of Theorem \ref{Tang-THM}.
\end{proof}

\section{$5$-Adic orders}
For any integer $n$, let $\nu(n)$ denote the $5$-adic order of $n$ with the convention that $\nu(0)=\infty$. We need the following
necessary lemma.
\begin{lemma}
For any $i\geq1$ and $j\geq1$,
\begin{align}
\nu(\alpha_{i,j}) &\geq\left\lfloor\dfrac{5j-i-1}{6}\right\rfloor,\label{5-adic-alpha}\\
\nu(\beta_{i,j}) &\geq\left\lfloor\dfrac{5j-i-2}{6}\right\rfloor.\label{5-adic-beta}
\end{align}
\end{lemma}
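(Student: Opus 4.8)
The plan is to prove the two $5$-adic bounds \eqref{5-adic-alpha} and \eqref{5-adic-beta} simultaneously by strong induction on the column index $j$, using the linear recurrence \eqref{Rec-CH-app} (equivalently \eqref{Rec-CH}) that the vectors $v_i=U(q^{-2}\gamma\xi^{i-1})$ and $w_i=U(q^{-2}\delta\xi^{i-1})$ satisfy. First I would recast the recurrence at the level of coefficients: writing $v_i=\delta\sum_j\alpha_{i,j}\xi^{j-1}$ and expanding the polynomials $A,B,C,D,E$ in $\xi$, the identity \eqref{Rec-CH-app} becomes, for each fixed $i\geq6$, a relation expressing $\alpha_{i,j}$ as an explicit $\mathbb{Z}$-linear combination of the entries $\alpha_{i-1,j'},\alpha_{i-2,j'},\ldots,\alpha_{i-5,j'}$ with $j'\in\{j-5,j-4,j-3,j-2,j-1\}$, where the coefficients are the (integer) coefficients of $\xi^k$ in $A,\ldots,E$ for $k=1,\ldots,5$. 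The key arithmetic input one reads off from \eqref{N-alpha}, \eqref{N-beta}, \eqref{D-HS} (and \eqref{Rec-CH}) is that every coefficient of $\xi^k$ in $A,B,C,D,E$ is divisible by $5^{\max(0,\,?\,)}$ in a pattern that exactly matches the floor function: the coefficient of $\xi^k$ carries $5$-adic order at least $\lceil (5k-6)/? \rceil$ — concretely $205=5\cdot41$, $4300=5^2\cdot172$, $34000=5^3\cdot\cdots$, etc., and this "$+5k/6$-per-step" gain is precisely what makes the floor $\lfloor(5j-i-1)/6\rfloor$ self-propagating.

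The induction scheme I would use is: fix $j$ and prove the bound for all $i\geq1$ at once, having already established it for all smaller column indices. The base cases $j\le 5$ (really the initial data for $i\le 5$, i.e. \eqref{ini-val-alpha-1}--\eqref{ini-val-alpha-5} and \eqref{ini-val-beta-1}--\eqref{ini-val-beta-5}) must be checked directly: one verifies by inspection that each listed coefficient in those five explicit expansions has $5$-adic order at least the claimed floor value — e.g.\ in \eqref{ini-val-alpha-3}, $\alpha_{3,2}=170=2\cdot5\cdot17$ must satisfy $\nu\ge\lfloor(10-3-1)/6\rfloor=1$, $\alpha_{3,3}=4425=3\cdot5^2\cdot59$ satisfies $\nu\ge\lfloor(15-3-1)/6\rfloor=1$, and so on through all of them. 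Then for $i\geq6$ the recurrence \eqref{Rec-CH-app} gives $\alpha_{i,j}$ as a sum of terms $c_k\,\alpha_{i-s,\,j-k}$ where $c_k$ is the coefficient of $\xi^k$ in the polynomial multiplying $v_{i-s}$; by the induction hypothesis $\nu(\alpha_{i-s,j-k})\ge\lfloor(5(j-k)-(i-s)-1)/6\rfloor$, and by the explicit coefficient table $\nu(c_k)\ge$ (the appropriate amount), so $\nu$ of each product is at least $\lfloor(5j-i-1)/6\rfloor$ after one checks the elementary inequality $\nu(c_k)+\lfloor(5(j-k)-(i-s)-1)/6\rfloor\ge\lfloor(5j-i-1)/6\rfloor$ for every admissible pair $(k,s)$ with $1\le k\le 5$, $1\le s\le 5$. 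Summing preserves the lower bound, giving the claim for $\alpha_{i,j}$; the argument for $\beta_{i,j}$ is identical except the initial data \eqref{ini-val-beta-1}--\eqref{ini-val-beta-5} shifts the constant, producing the $-2$ instead of $-1$ in the numerator.

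The main obstacle — and really the only nontrivial point — is the verification of the family of elementary inequalities $\nu(c_k)+\lfloor(5(j-k)-(i-s)-1)/6\rfloor\ge\lfloor(5j-i-1)/6\rfloor$, i.e. that for each of the $25$ coefficient slots $(k,s)$ one has $\nu(c_k)\ge\big\lceil(5j-i-1)/6\big\rceil-\big\lfloor(5j-5k-i+s-1)/6\big\rfloor$, which after simplification reduces to checking $\nu(c_k)\ge$ a quantity bounded above by $\lceil(5k-s)/6\rceil$; since $1\le s\le 5$ this is at most $\lceil 5k/6\rceil$, so it suffices that the coefficient of $\xi^k$ in each of $A,B,C,D,E$ has $5$-adic order at least $\lceil(5k-5)/6\rceil$ — which is $0,1,2,3,4$ for $k=1,2,3,4,5$ — and indeed one reads off from \eqref{Rec-CH}: the $\xi$-coefficients $205,215,85,15,1$ have $\nu\ge 0$; the $\xi^2$-coefficients $4300,4475,1750,305,20$ all have $\nu\ge 1$; the $\xi^3$-coefficients $34000,35000,13525,2325,150$ all have $\nu\ge 2$; the $\xi^4$-coefficients $120000,122000,46500,7875,500$ all have $\nu\ge 3$; and the $\xi^5$-coefficients $160000,160000,60000,10000,625$ all have $\nu\ge 4$. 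Once this table is in place the floor-function bookkeeping is routine (one must be slightly careful because floors do not add, so the correct statement to carry through the induction is stated with the floor and the inequality $\lfloor a\rfloor+\lfloor b\rfloor\ge\lfloor a+b\rfloor-1$ is \emph{not} used — rather one checks $\nu(c_k)+\lfloor(\cdots)/6\rfloor\ge\lfloor(\cdots)/6\rfloor$ directly case by case, which always holds because $\nu(c_k)$ exceeds the worst-case floor deficit). I expect writing out these case checks cleanly, rather than any conceptual difficulty, to be where the work lies.
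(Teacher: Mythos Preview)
Your approach is the same as the paper's: verify the bound for the initial rows $i\le 5$ directly from \eqref{ini-val-alpha-1}--\eqref{ini-val-alpha-5} and \eqref{ini-val-beta-1}--\eqref{ini-val-beta-5}, then induct via the recurrence \eqref{Rec-CH-app}. (The paper inducts on $i$ rather than $j$, which is marginally cleaner since the recurrence steps down in $i$, but your scheme also works because $A,\ldots,E$ have no constant term in $\xi$, so only strictly smaller $j$-indices appear on the right.)

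Your reduction of the $25$ coefficient inequalities, however, contains a real error. Writing $c_{s,k}$ for the coefficient of $\xi^k$ in the polynomial multiplying $v_{i-s}$, the condition you need is $\nu(c_{s,k})\ge\lceil(5k-s)/6\rceil$ for each pair $(s,k)$. You then assert it suffices that $\nu(c_k)\ge\lceil(5k-5)/6\rceil$, i.e.\ the value at $s=5$; but that is the \emph{minimum} of the required bounds over $s$, not the maximum, so it is not a sufficient condition. Nor can you salvage it by taking the maximum $\lceil(5k-1)/6\rceil$ instead: for $k=1$ that would demand $\nu\ge 1$ of every $\xi$-coefficient, yet the coefficient of $\xi$ in $E$ is $1$, with $\nu(1)=0$. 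The $s$-dependence is essential --- it is precisely because that coefficient sits in the $s=5$ slot, where only $\nu\ge 0$ is required, that the inequality holds. Hence no uniform-in-$s$ shortcut exists, and the $25$ inequalities must be checked individually, exactly as the paper does and as you yourself concede in your final sentence. Once those checks are actually carried out (they all hold), the proof is complete.
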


\begin{proof}
Since the first five rows of $(\alpha_{i,j})_{i,j\geq1}$ are given (see Table \ref{Tab1} below), one may directly check
\eqref{5-adic-alpha} for $1\leq i\leq5$. Assume that \eqref{5-adic-alpha} holds for $1,2,\ldots,i-1$ with some $i\geq6$, then
\begin{align*}
\nu(\alpha_{i-1,j-1})+1 &\geq\left\lfloor\dfrac{5(j-1)-(i-1)-1}{6}\right\rfloor+1\geq\left\lfloor\dfrac{5j-i-1}{6}\right\rfloor,\\
\nu(\alpha_{i-1,j-2})+2 &\geq\left\lfloor\dfrac{5(j-2)-(i-1)-1}{6}\right\rfloor+2\geq\left\lfloor\dfrac{5j-i-1}{6}\right\rfloor,\\
\nu(\alpha_{i-1,j-3})+3 &\geq\left\lfloor\dfrac{5(j-3)-(i-1)-1}{6}\right\rfloor+3\geq\left\lfloor\dfrac{5j-i-1}{6}\right\rfloor,\\
\nu(\alpha_{i-1,j-4})+4 &\geq\left\lfloor\dfrac{5(j-4)-(i-1)-1}{6}\right\rfloor+4\geq\left\lfloor\dfrac{5j-i-1}{6}\right\rfloor,\\
\nu(\alpha_{i-1,j-5})+4 &\geq\left\lfloor\dfrac{5(j-5)-(i-1)-1}{6}\right\rfloor+4=\left\lfloor\dfrac{5j-i-1}{6}\right\rfloor,\\
\nu(\alpha_{i-2,j-1})+1 &\geq\left\lfloor\dfrac{5(j-1)-(i-2)-1}{6}\right\rfloor+1\geq\left\lfloor\dfrac{5j-i-1}{6}\right\rfloor,\\
\nu(\alpha_{i-2,j-2})+2 &\geq\left\lfloor\dfrac{5(j-2)-(i-2)-1}{6}\right\rfloor+2\geq\left\lfloor\dfrac{5j-i-1}{6}\right\rfloor,\\
\nu(\alpha_{i-2,j-3})+4 &\geq\left\lfloor\dfrac{5(j-3)-(i-2)-1}{6}\right\rfloor+4\geq\left\lfloor\dfrac{5j-i-1}{6}\right\rfloor,\\
\nu(\alpha_{i-2,j-4})+3 &\geq\left\lfloor\dfrac{5(j-4)-(i-2)-1}{6}\right\rfloor+3=\left\lfloor\dfrac{5j-i-1}{6}\right\rfloor,\\
\nu(\alpha_{i-2,j-5})+4 &\geq\left\lfloor\dfrac{5(j-5)-(i-2)-1}{6}\right\rfloor+3\geq\left\lfloor\dfrac{5j-i-1}{6}\right\rfloor,\\
\nu(\alpha_{i-3,j-1})+1 &\geq\left\lfloor\dfrac{5(j-1)-(i-3)-1}{6}\right\rfloor+1\geq\left\lfloor\dfrac{5j-i-1}{6}\right\rfloor,\\
\nu(\alpha_{i-3,j-2})+3 &\geq\left\lfloor\dfrac{5(j-2)-(i-3)-1}{6}\right\rfloor+3\geq\left\lfloor\dfrac{5j-i-1}{6}\right\rfloor,\\
\nu(\alpha_{i-3,j-3})+2 &\geq\left\lfloor\dfrac{5(j-3)-(i-3)-1}{6}\right\rfloor+2=\left\lfloor\dfrac{5j-i-1}{6}\right\rfloor,\\
\nu(\alpha_{i-3,j-4})+3 &\geq\left\lfloor\dfrac{5(j-4)-(i-3)-1}{6}\right\rfloor+3\geq\left\lfloor\dfrac{5j-i-1}{6}\right\rfloor,\\
\nu(\alpha_{i-3,j-5})+4 &\geq\left\lfloor\dfrac{5(j-5)-(i-3)-1}{6}\right\rfloor+4\geq\left\lfloor\dfrac{5j-i-1}{6}\right\rfloor,\\
\nu(\alpha_{i-4,j-1})+1 &\geq\left\lfloor\dfrac{5(j-1)-(i-4)-1}{6}\right\rfloor+1\geq\left\lfloor\dfrac{5j-i-1}{6}\right\rfloor,\\
\nu(\alpha_{i-4,j-2})+1 &\geq\left\lfloor\dfrac{5(j-2)-(i-4)-1}{6}\right\rfloor+1=\left\lfloor\dfrac{5j-i-1}{6}\right\rfloor,\\
\nu(\alpha_{i-4,j-3})+2 &\geq\left\lfloor\dfrac{5(j-3)-(i-4)-1}{6}\right\rfloor+2\geq\left\lfloor\dfrac{5j-i-1}{6}\right\rfloor,\\
\nu(\alpha_{i-4,j-4})+3 &\geq\left\lfloor\dfrac{5(j-4)-(i-4)-1}{6}\right\rfloor+3\geq\left\lfloor\dfrac{5j-i-1}{6}\right\rfloor,\\
\nu(\alpha_{i-4,j-5})+4 &\geq\left\lfloor\dfrac{5(j-5)-(i-4)-1}{6}\right\rfloor+4\geq\left\lfloor\dfrac{5j-i-1}{6}\right\rfloor,\\
\nu(\alpha_{i-5,j-1})+0 &\geq\left\lfloor\dfrac{5(j-1)-(i-5)-1}{6}\right\rfloor+0=\left\lfloor\dfrac{5j-i-1}{6}\right\rfloor,\\
\nu(\alpha_{i-5,j-2})+1 &\geq\left\lfloor\dfrac{5(j-2)-(i-5)-1}{6}\right\rfloor+1\geq\left\lfloor\dfrac{5j-i-1}{6}\right\rfloor,\\
\nu(\alpha_{i-5,j-3})+2 &\geq\left\lfloor\dfrac{5(j-3)-(i-5)-1}{6}\right\rfloor+2\geq\left\lfloor\dfrac{5j-i-1}{6}\right\rfloor,\\
\nu(\alpha_{i-5,j-4})+3 &\geq\left\lfloor\dfrac{5(j-4)-(i-5)-1}{6}\right\rfloor+3\geq\left\lfloor\dfrac{5j-i-1}{6}\right\rfloor,\\
\nu(\alpha_{i-5,j-5})+4 &\geq\left\lfloor\dfrac{5(j-5)-(i-5)-1}{6}\right\rfloor+4\geq\left\lfloor\dfrac{5j-i-1}{6}\right\rfloor.
\end{align*}
It follows from \eqref{Rec-CH-app} and \eqref{key-subs-1} that
\begin{align*}
\nu(\alpha_{i,j}) &\geq\left\lfloor\dfrac{5j-i-1}{6}\right\rfloor.
\end{align*}
We therefore complete the proof of \eqref{5-adic-alpha} by induction.

The proof of \eqref{5-adic-even} is essentially the same as that of \eqref{5-adic-alpha} and the details are omitted.
\end{proof}

\begin{table}[tbp]\caption{A table of values of $\nu(\alpha_{i,j})$}\label{Tab1}
\scriptsize
\centering
\begin{tabular}{ccccccccccccccccccc}
\hline
\hline
$i\setminus j$ &1 &2 &3 &4 &5 &6 &~7 &~8 &~9 &10 &11 &12 &13 &14 &15 &16 &17 &18\\
\hline
1 &0 &$\infty$ &$\infty$ &$\infty$ &$\infty$ &$\infty$ &$\infty$ &$\infty$ &$\infty$ &$\infty$ &$\infty$ &$\infty$ &$\infty$ &$\infty$
&$\infty$ &$\infty$ &$\infty$ &$\infty$\\
2 &0 &1 &$\infty$ &$\infty$ &$\infty$ &$\infty$ &$\infty$ &$\infty$ &$\infty$ &$\infty$ &$\infty$ &$\infty$ &$\infty$ &$\infty$
&$\infty$ &$\infty$ &$\infty$ &$\infty$\\
3 &0 &1 &2 &3 &3 &4 &5 &$\infty$ &$\infty$ &$\infty$ &$\infty$ &$\infty$ &$\infty$ &$\infty$ &$\infty$ &$\infty$ &$\infty$ &$\infty$\\
4 &$\infty$ &1 &2 &2 &4 &4 &5 &6 &7 &7 &8 &9 &$\infty$  &$\infty$  &$\infty$ &$\infty$ &$\infty$ &$\infty$\\
5 &$\infty$ &1 &1 &3 &4 &4 &5 &7 &6 &7 &9 &9 &10 &13 &11 &12 &13 &$\infty$\\
\hline
\hline
\end{tabular}
\end{table}

We next study the $5$-adic order of $x_{2m,i}$ and $x_{2m+1,i}$.

\begin{theorem}
For any $m\geq1$ and $i\geq2$,
\begin{align}
\nu(x_{2m-1,i}) &\geq 2m-1+\left\lfloor\dfrac{5i-10}{6}\right\rfloor,\label{5-adic-odd}\\
\nu(x_{2m,i}) &\geq 2m+\left\lfloor\dfrac{5i-10}{6}\right\rfloor+\delta_{i,3},\label{5-adic-even}
\end{align}
where $\delta_{i,j}$ is the is the Kronecker delta function, it equals $1$ when $i=j$ and $0$ otherwise. Moreover,
\begin{align}
\nu(x_{2m-1,1}) &=0,\label{5-adic-FE-1}\\
\nu(x_{2m,1}) &=0.\label{5-adic-FE-2}
\end{align}
\end{theorem}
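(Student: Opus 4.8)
The plan is to establish all four statements simultaneously by induction on $m$, using the recursions $\textbf{x}_{2m}=\textbf{x}_{2m-1}A$ and $\textbf{x}_{2m+1}=\textbf{x}_{2m}B$ together with the Lemma's bounds $\nu(\alpha_{i,j})\geq\lfloor(5j-i-1)/6\rfloor$ and $\nu(\beta_{i,j})\geq\lfloor(5j-i-2)/6\rfloor$. First I would check the base case $m=1$ directly: since $\textbf{x}_1=(18,720,7625,32500,50000,0,0,\ldots)$ one reads off $\nu(x_{1,1})=\nu(18)=0$, $\nu(x_{1,2})=\nu(720)=1\geq 1+\lfloor 0/6\rfloor=1$, $\nu(x_{1,3})=\nu(7625)=3\geq 1+\lfloor 5/6\rfloor=1$, and similarly for $i=4,5$; all higher entries are $0$ so the bound is vacuous. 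Then I would compute $\textbf{x}_2=\textbf{x}_1A$ using the tabulated first five rows of $(\alpha_{i,j})$ (Table~\ref{Tab1}) to verify \eqref{5-adic-even} and \eqref{5-adic-FE-2} for $m=1$; the entry $\nu(x_{2,1})=0$ follows from $\nu(\alpha_{1,1})=\nu(\alpha_{2,1})=\nu(\alpha_{3,1})=0$ combined with $\nu(\alpha_{4,1})=\nu(\alpha_{5,1})=\infty$ and the fact that $18+720+7625=8363$ has $5$-adic order $0$, while the $\delta_{i,3}$ correction at $i=3$ comes from a genuine cancellation modulo a higher power of $5$ in the $j=3$ column.

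For the inductive step, assuming \eqref{5-adic-odd} holds for a given $m$, I would prove \eqref{5-adic-even}. Writing $x_{2m,j}=\sum_{i\geq1}x_{2m-1,i}\alpha_{i,j}$, I estimate each term: for $i\geq2$,
\begin{align*}
\nu(x_{2m-1,i})+\nu(\alpha_{i,j})\geq 2m-1+\left\lfloor\frac{5i-10}{6}\right\rfloor+\left\lfloor\frac{5j-i-1}{6}\right\rfloor,
\end{align*}
and I would show the right side is at least $2m+\lfloor(5j-10)/6\rfloor$ by a floor-arithmetic argument (splitting on $i\bmod 6$), while the $i=1$ term contributes $\nu(\alpha_{1,j})$, which for $j\geq2$ is $\infty$ and for $j=1$ is $0$. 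The $\delta_{j,3}$ improvement requires isolating the $j=3$ column and checking that the dominant contributions — from $i=1,2,3$, say — cancel modulo one extra power of $5$; this is a finite check using the explicit polynomial $N_\alpha$ in \eqref{N-alpha} modulo $5^{2}$ or so. The step from \eqref{5-adic-even} to \eqref{5-adic-odd} with $m$ replaced by $m+1$ is entirely analogous, using $x_{2m+1,j}=\sum_i x_{2m,i}\beta_{i,j}$, the bound $\nu(\beta_{i,j})\geq\lfloor(5j-i-2)/6\rfloor$, and the $i=3$ gain $\delta_{i,3}$ in $\nu(x_{2m,i})$ — which is exactly what is needed to compensate for the smaller numerator $5j-i-2$ in the $\beta$-estimate versus $5j-i-1$ for $\alpha$. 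Finally, \eqref{5-adic-FE-1} and \eqref{5-adic-FE-2} propagate because $\nu(x_{2m+1,1})=\nu\big(\sum_i x_{2m,i}\beta_{i,1}\big)$ and only $i=1,\ldots,5$ contribute with $\beta_{i,1}\in\{-1,3,1,0,0\}$ (from $N_\beta$), so modulo $5$ this reduces to $-x_{2m,1}+3x_{2m,2}+x_{2m,3}\equiv -x_{2m,1}\not\equiv 0$ by the inductive hypotheses on $\nu(x_{2m,1})=0$ and $\nu(x_{2m,2}),\nu(x_{2m,3})\geq 1$; symmetrically for the even case with $\alpha_{i,1}\in\{-1,1,1,0,0\}$.

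The main obstacle I anticipate is the bookkeeping in the floor-function inequalities: one must verify that $\lfloor(5i-10)/6\rfloor+\lfloor(5j-i-1)/6\rfloor+1\geq\lfloor(5j-10)/6\rfloor$ for all $i\geq2$, $j\geq1$ (and the analogous inequality with $-2$ in place of $-1$ but an extra $+\delta_{i,3}$ on the left), which fails for the "naive" bound precisely at $i=3$ and forces the introduction of the Kronecker-delta correction term — this is why the statement is phrased with $\delta_{i,3}$ rather than a clean uniform bound. Getting the interplay between the two floors right, and confirming that the single exceptional index $i=3$ is the only place the correction is needed (in particular that it does not propagate to create corrections at other indices in later generations), will require careful case analysis on residues of $i$ and $j$ modulo $6$, but no deep new idea beyond the Lemma already in hand.
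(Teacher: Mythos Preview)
Your plan matches the paper's proof: induction on $m$ via the recursions $\textbf{x}_{2m}=\textbf{x}_{2m-1}A$, $\textbf{x}_{2m+1}=\textbf{x}_{2m}B$ and the Lemma, with the base case read off from \eqref{BB-gf-1}--\eqref{BB-gf-2}. The paper organises the inductive step by treating $i\geq4$ generically (separating the contributions $k=2$, $k=3$, $k\geq4$, the last via a monotonicity argument on the floor sum $h(i,k)$) and then handling $i=2$ and $i=3$ as separate finite checks; your proposed residue-mod-$6$ bookkeeping is an equivalent packaging of the same floor inequalities, and your treatment of \eqref{5-adic-FE-1}--\eqref{5-adic-FE-2} via the explicit values $\alpha_{i,1}\in\{-1,1,1,0,0\}$ and $\beta_{i,1}\in\{-1,3,1,0,0\}$ is exactly what the paper does.

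One correction worth flagging: the extra power of $5$ at $j=3$ (the $\delta_{i,3}$ term in \eqref{5-adic-even}) does \emph{not} arise from cancellation among the summands of $x_{2m,3}=\sum_k x_{2m-1,k}\alpha_{k,3}$. What actually happens is that the entries in the $j=3$ column of $A$ individually exceed the Lemma's bound: from \eqref{ini-val-alpha-2} one has $\alpha_{2,3}=0$, and from \eqref{ini-val-alpha-3} one has $\alpha_{3,3}=4425$ with $\nu(\alpha_{3,3})=2>\lfloor11/6\rfloor=1$ (these are recorded in Table~\ref{Tab1}). With these sharper termwise values every surviving summand already has $5$-adic order at least $2m+1$; your ``finite check using $N_\alpha$'' will therefore succeed, but the mechanism you will find is termwise divisibility rather than additive cancellation. (A minor arithmetic slip: $x_{2,1}=18\cdot(-1)+720+7625=8327$, not $18+720+7625$; the sign of $\alpha_{1,1}=-1$ matters, though both numbers have $5$-adic order $0$ so your conclusion is unaffected.)
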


\begin{proof}
The right-hand side of \eqref{BB-gf-1} and \eqref{BB-gf-2} imply that \eqref{5-adic-odd} and \eqref{5-adic-even} hold for $m=1$. Assume
that \eqref{5-adic-odd} holds for some $m\geq1$. Next, we shall consider the following three cases.

\textit{Case I}: $i\geq4$.
It follows from \eqref{ini-val-alpha-1} that $\alpha_{1,i}=0$ if $i\geq2$, thus we get
\begin{align*}
\nu(x_{2m,i}) &=\nu{\left(\sum_{k=1}^\infty x_{2m-1,k}\alpha_{k,i}\right)}\\
 &=\nu{\left(x_{2m-1,1}\alpha_{1,i}+\sum_{k=2}^\infty x_{2m-1,k}\alpha_{k,i}\right)}\\
 &\geq\min_{k\geq2}\{\nu(x_{2m-1,k})+\nu(\alpha_{k,i})\}.
\end{align*}
According to \eqref{5-adic-alpha} and \eqref{5-adic-odd},
\begin{align}
\nu(x_{2m-1,2})+\nu(\alpha_{2,i}) &\geq 2m-1+\left\lfloor\dfrac{5i-3}{6}\right\rfloor\notag\\
 &\geq 2m+\left\lfloor\dfrac{5i-10}{6}\right\rfloor\label{Case-1-ineq}
\end{align}
and
\begin{align}
\nu(x_{2m-1,3})+\nu(\alpha_{3,i}) &\geq 2m-1+\left\lfloor\dfrac{5i-4}{6}\right\rfloor\notag\\
 &=2m+\left\lfloor\dfrac{5i-10}{6}\right\rfloor.\label{Case-2-ineq}
\end{align}
We further have
\begin{align*}
\min_{k\geq4}\{\nu(x_{2m-1,k})+\nu(\alpha_{k,i})\}\geq 2m-1+\left\lfloor\dfrac{5k-10}{6}\right\rfloor
+\left\lfloor\dfrac{5i-k-1}{6}\right\rfloor.
\end{align*}
Let
\begin{align*}
h(i,k)=\left\lfloor\dfrac{5k-10}{6}\right\rfloor+\left\lfloor\dfrac{5i-k-1}{6}\right\rfloor.
\end{align*}
For a given $i$, if we increase $k$ by $2$, $\left\lfloor\dfrac{5k-10}{6}\right\rfloor$ increases by at least $1$, but $\left\lfloor\dfrac{5i-k-1}{6}\right\rfloor$ decreases by at most $1$. Thus $h(i,k+2)\geq h(i,k)$, since
\begin{align*}
h(i,4)=1+\left\lfloor\dfrac{5i-5}{6}\right\rfloor\qquad\textrm{and}\qquad h(i,5)=2+\left\lfloor\dfrac{5i-6}{6}\right\rfloor,
\end{align*}
from which we obtain
\begin{align}
\min_{k\geq4}\{\nu(x_{2m-1,k})+\nu(\alpha_{k,i})\}\geq 2m+\left\lfloor\dfrac{5i-5}{6}\right\rfloor.\label{Case-3-ineq}
\end{align}
The inequalities \eqref{Case-1-ineq}--\eqref{Case-3-ineq} imply that \eqref{5-adic-even} is true for $m$.

Assume that \eqref{5-adic-even} holds for some $m\geq1$. It follows from \eqref{ini-val-beta-1} that $\beta_{1,i}=0$ if $i\geq2$, thus
\begin{align*}
\nu(x_{2m+1,i}) &=\nu{\left(\sum_{k=1}^\infty x_{2m,k}\beta_{k,i}\right)}\\
 &\geq\min_{k\geq2}\{\nu(x_{2m,k})+\nu(\beta_{k,i})\}\\
 &=\min_{k\geq4}{\left\{\big(\nu(x_{2m,2})+\nu(\beta_{2,i})\big),\big(\nu(x_{2m,3})+\nu(\beta_{3,i})\big),
\big(\nu(x_{2m,k})+\nu(\beta_{k,i})\big)\right\}}.
\end{align*}
Through a similar argument,
\begin{align*}
\nu(x_{2m+1,i})\geq 2m+1+\left\lfloor\dfrac{5i-10}{6}\right\rfloor.
\end{align*}
This proves that \eqref{5-adic-odd} is true for $m+1$. We therefore finish the proofs of \eqref{5-adic-odd} and \eqref{5-adic-even}
by induction.

\textit{Case II}: $i=2$. First, we know that \eqref{5-adic-odd} and \eqref{5-adic-even} are true for $m=1$. Assume that
\eqref{5-adic-odd} holds for some $m\geq1$, then
\begin{align*}
\nu(x_{2m,2}) &=\nu{\left(\sum_{k=1}^\infty x_{2m-1,k}\alpha_{k,2}\right)}\\
 &=\min_{k\geq4}{\left\{\big(\nu(x_{2m-1,2})+\nu(\alpha_{2,2})\big),\big(\nu(x_{2m-1,3})+\nu(\alpha_{3,2})\big)\right.},\\
 &\qquad\qquad\left.\big(\nu(x_{2m-1,k})+\nu(\alpha_{k,2})\big)\right\}\\
 &\geq2m.
\end{align*}
This implies that \eqref{5-adic-even} is true for $m$.

Assume that \eqref{5-adic-even} is true for some $m\geq1$, then
\begin{align*}
\nu(x_{2m+1,2}) &=\nu{\left(\sum_{k=1}^\infty x_{2m,k}\beta_{k,2}\right)}\\
 &=\min_{k\geq4}{\left\{\big(\nu(x_{2m,2})+\nu(\beta_{2,2})\big),\big(\nu(x_{2m,3})+\nu(\beta_{3,2})\big)\right.},\\
 &\qquad\qquad\left.\big(\nu(x_{2m,k})+\nu(\beta_{k,2})\big)\right\}\\
 &\geq 2m+1.
\end{align*}
This implies that \eqref{5-adic-odd} holds for $m+1$. The proof is finished by induction.

\textit{Case III}: $i=3$. The proof is highly similar to that of the case $i=2$, thus we omit the details here.

Combining the above three cases, we obtain \eqref{5-adic-odd} and \eqref{5-adic-even} by induction.

According to the first coefficient of right-hand side representation of \eqref{BB-gf-1} and \eqref{BB-gf-2}, we find that
\eqref{5-adic-FE-1} and \eqref{5-adic-FE-2} are true for $m=1$. Assume that \eqref{5-adic-FE-1} is true for some $m\geq1$, then
\begin{align*}
\nu(x_{2m,1}) &=\nu{\left(\sum_{k=1}^\infty x_{2m-1,k}\alpha_{k,1}\right)}\\
 &=\nu{\left(x_{2m-1,1}\alpha_{1,1}+\sum_{k=2}^\infty x_{2m-1,k}\alpha_{k,1}\right)}.
\end{align*}
It follows from \eqref{5-adic-odd} that
\begin{align*}
\nu{\left(\sum_{k=2}^\infty x_{2m-1,k}\alpha_{k,1}\right)}\geq\min_{k\geq2}\big(\nu(x_{2m-1,k})+\nu(\alpha_{k,1})\big)
\geq\min_{k\geq2}\nu(x_{2m-1,k})\geq 2m-1.
\end{align*}
Since $\nu(x_{2m-1,1})=\nu(\alpha_{1,1})=0$, then $\nu(x_{2m,1})=0$. This implies that \eqref{5-adic-FE-2} holds for some $m$.

Now assume that \eqref{5-adic-FE-2} holds for some $m\geq1$, then
\begin{align*}
\nu(x_{2m+1,1})=\nu{\left(x_{2m,1}\beta_{1,1}+\sum_{k=2}^\infty x_{2m,k}\beta_{k,1}\right)}.
\end{align*}
From \eqref{5-adic-even} we have
\begin{align*}
\nu{\left(\sum_{k=2}^\infty x_{2m,k}\beta_{k,1}\right)}\geq\min_{k\geq2}\big(\nu(x_{2m,k})+\nu(\beta_{k,1})\big)
\geq\min_{k\geq2}\nu(x_{2m,k})\geq 2m.
\end{align*}
Since $\nu(x_{2m,1})=\nu(\beta_{1,1})=0$, therefore $\nu(x_{2m+1,1})=0$. This implies that \eqref{5-adic-FE-1} holds for $m+1$. We
therefore complete the proofs of \eqref{5-adic-FE-1} and \eqref{5-adic-FE-2} by induction.
\end{proof}

\section{Proof of conjecture due to Baruah and Begum}
Now we are in a position to prove \eqref{BB-conj-1} and \eqref{BB-conj-2}.
\begin{proof}[Proof of Corollary \ref{Pro-conj}]
According to \eqref{gf-fam-even} and \eqref{5-adic-even}, we find that
\begin{align}
\sum_{n=0}^\infty\overline{\textrm{spt}}_\omega{\left(2\times5^{2m}n+5^{2m}\right)}q^n\equiv x_{2m,1}E(q^2)E(q^5)^2
\pmod{5^{2m}},\label{mod-init}
\end{align}
since $\nu(x_{2m,i})\geq 2m$ when $i\geq2$. From \cite[p.~161, Theorem 7.4.1]{Berndt06} or \cite[Eq. (8.1.1)]{Hir17},
\begin{align*}
E(q)=E(q^{25}){\left(\dfrac{1}{R(q^5)}-q-q^2R(q^5)\right)},
\end{align*}
we further have
\begin{align}
\sum_{n=0}^\infty\overline{\textrm{spt}}_\omega{\left(2\times5^{2m}n+5^{2m}\right)}q^n &\equiv x_{2m,1}E(q^5)^2E(q^{50})\notag\\
 &\quad\times\left(\dfrac{1}{R(q^{10})}-q^2-q^4R(q^{10})\right)\pmod{5^{2m}},\label{mod-simp}
\end{align}
where
\begin{align*}
R(q)=\dfrac{(q;q^5)_\infty(q^4;q^5)_\infty}{(q^2;q^5)_\infty(q^3;q^5)_\infty}.
\end{align*}
The right-hand side of \eqref{mod-simp} gives
\begin{align*}
\overline{\textrm{spt}}_\omega{\left(2\times5^{2m}(5n+t)+5^{2m}\right)}\equiv0\pmod{5^{2m}},\quad t\in\{1,3\}.
\end{align*}
This is exactly \eqref{BB-conj-2}.

On the other hand, picking all terms of the form $q^{5n+2}$ in \eqref{mod-simp}, we obtain, modulo $5^{2m}$, that
\begin{align*}
 &\sum_{n=0}^\infty\overline{\textrm{spt}}_\omega{\left(2\times5^{2m+1}n+5^{2m+1}\right)}q^n
 =\sum_{n=0}^\infty\overline{\textrm{spt}}_\omega{\left(2\times5^{2m}(5n+2)+5^{2m}\right)}q^n\\
 &\quad\equiv-x_{2m,1}E(q)^2E(q^{10})=-E(q^{10})E(q^{25})^2{\left(\dfrac{1}{R(q^5)}-q-q^2R(q^5)\right)^2}\\
 &\quad=-x_{2m,1}E(q^{10})E(q^{25})^2{\left(\dfrac{1}{R(q^5)^2}-\dfrac{2q}{R(q^5)}-q^2+2q^3R(q^5)+q^4R(q^5)^2\right)},
\end{align*}
from which we obtain
\begin{align}
\sum_{n=0}^\infty\overline{\textrm{spt}}_\omega{\left(2\times5^{2m+1}(5n+2)+5^{2m+1}\right)}q^n\equiv x_{2m,1}E(q^2)E(q^5)^2
\pmod{5^{2m}}.\label{mod-simp-2}
\end{align}
It follows from \eqref{mod-init} and \eqref{mod-simp-2} that
\begin{align*}
\overline{\textrm{spt}}_\omega{\left(2\times5^{2m+2}n+5^{2m+2}\right)}\equiv
\overline{\textrm{spt}}_\omega{\left(2\times5^{2m}n+5^{2m}\right)}\pmod{5^{2m}},
\end{align*}
or, equivalently,
\begin{align*}
\overline{\textrm{spt}}_\omega{\left(5^{2m+1}(10n+5)\right)}\equiv
\overline{\textrm{spt}}_\omega{\left(5^{2m-1}(10n+5)\right)}\pmod{5^{2m}}.
\end{align*}
By induction, we deduce that for any $k\geq0$,
\begin{align*}
\overline{\textrm{spt}}_\omega{\left(5^{2k+2m+1}(10n+5)\right)}\equiv
\overline{\textrm{spt}}_\omega{\left(5^{2m-1}(10n+5)\right)}\pmod{5^{2m}}.
\end{align*}
This is the case $\ell\equiv0\pmod{2}$ in \eqref{BB-conj-1}.

Similarly, from \eqref{gf-fam-odd} and \eqref{5-adic-even}, we deduce that for any $m\geq1$,
\begin{align*}
\sum_{n=0}^\infty\overline{\textrm{spt}}_\omega{\left(2\times5^{2m-1}n+5^{2m-1}\right)}q^n &\equiv x_{2m-1,1}E(q)^2E(q^{10})
\pmod{5^{2m-1}}.
\end{align*}
Through a similar argument, we find that
\begin{align*}
\overline{\textrm{spt}}_\omega{\left(2\times5^{2m+1}n+5^{2m+1}\right)}\equiv
\overline{\textrm{spt}}_\omega{\left(2\times5^{2m-1}n+5^{2m-1}\right)}\pmod{5^{2m-1}}.
\end{align*}
By induction, we find that for any $k\geq0$,
\begin{align*}
\overline{\textrm{spt}}_\omega{\left(5^{2k+2m}(10n+5)\right)}\equiv
\overline{\textrm{spt}}_\omega{\left(5^{2m-2}(10n+5)\right)}\pmod{5^{2m-1}}.
\end{align*}
This is the case $\ell\equiv1\pmod{2}$ in \eqref{BB-conj-1}. Combining these two cases, the proof of \eqref{BB-conj-1} is completed.
\end{proof}

\section*{Acknowledgements}

This work was supported by the Doctoral start-up research Foudation (No.~21XLB038) of Chongqing Normal University.

\end{document}